\theoremstyle{plain}
\newtheorem{thm}{Theorem}[section]
\newtheorem{lemma}{Lemma}[section]
\numberwithin{equation}{section}
\newcommand{\p}{\partial}
\newcommand{\m}{\mathbb}
\newcommand{\om}{\omega}
\newcommand{\rr}{\mathbb{R}}
\newcommand{\zz}{\mathbb{Z}}
\newcommand{\ci}{\mathbb{T}}
\newcommand{\wh}{\widehat}
\newcommand{\vp}{\varphi}
\newcommand{\s}{\sigma}
\begin{document}

\title[Generalized CH2]{A Note on a Generalized Two Component \\ Camassa-Holm System in Sobolev Spaces}
\author{Ryan C. Thompson}
\date{June 12, 2024}
\keywords{Two-component Camassa-Holm system, higher nonlinearities, Cauchy problem, Sobolev spaces, well-posedness, continuity properties, non-uniform dependence on initial data, 
 approximate solutions, conserved quantities}
\subjclass[2020]{Primary: 35Q53}
\address{Department of Mathematics\\University of North Georgia\\ Dahlonega, GA 30597}
\email{ryan.thompson@ung.edu}

\begin{abstract}
In this paper, we consider a generalized two component Camassa-Holm system.  Based on local well-posedness results and lifespan estimates, we establish sharpness of continuity on the data-to-solution map by showing that it is not uniformly continuous from product Sobolev spaces $H^s \times H^{s}$ to $C([0,T]; H^s \times H^{s})$.  The proof of nonuniform dependence is based upon approximate solutions.
\end{abstract}

\maketitle

%
%
%
%

\section{Introduction}
We consider the Cauchy problem for a generalized two-component Camassa-Holm system
\begin{equation}
\label{gench}
\begin{cases}
m_t+v^pm_x+av^{p-1}v_xm=0, \ \ \ t>0, \ x \in \rr \\
n_t+u^qn_x+bu^{q-1}u_xn=0, \ \ \ t>0, \ x\in \rr \\ 
u(x,0) = u_0(x), \ v(x,0)=v_0(x), \ \ \ t=0, \ x\in \rr
\end{cases}
\end{equation}
where $p,q \in \zz^+$, $a,b \in \rr$, $m=u-u_{xx}$, and $n = v-v_{xx}$ which was recently proposed by Pan, Zhou, and Qiao in \cite{pzq}.  We show that the data-to-solution map is not uniformly continuous in product Sobolev spaces $H^s \times H^s$ for $s>5/2$.  Zhou, Qiao, and Mu previously established nonuniform dependence on the initial data in \cite{zhou3} in the nonperiodic case.  In this paper, we consider the periodic case and prove a similar result which concludes the story of nonuniform dependence in the product Sobolev spaces.

System \eqref{gench} was proposed in order to generalize and unify multiple equations that generated solitary waves and exhibited the physical property of wave breaking.  Indeed, throughout the twentieth century there was marked progress in the field of water wave theory that generated a multitude of nonlinear wave equations that can be found from system \eqref{gench} by choosing parameters $p,q,a,b$ appropriately.

Considering the case when $u=v$, $a=b=2$, and $p=q=1$ yields the celebrated Camassa-Holm equation
\begin{equation}
\label{ch}
m_t+um_x+2mu_x=0,
\end{equation}
which appears in the context of hereditary symmetries studied by Fokas and Fuchssteiner \cite{ff}.  This equation was first written explicitly and derived from the Euler equations  by Camassa and Holm in \cite{ch}, where they also found its 
``peakon" traveling wave solutions.
These are solutions with a discontinuity in the first spatial derivative at its crest. The simplest one in the non-periodic case
is of the form $u_c(x, t)=ce^{-|x-t|}$, where $c$ is a positive constant.
The CH equation possesses many  other 
remarkable properties such as infinitely many conserved quantities, a bi-Hamiltonian structure and a Lax pair.
For more information about how CH arises in the context of hereditary symmetries we refer to \cite{ff}.  Concerning it's physical relevance, we refer the reader to the works by Johnson \cite{j1}, \cite{j2} and Constantine and Lannes \cite{cl}.

For $u=v$, $a=b=3$, and $p=q=1$, we obtain the Degasperi-Procesi (DP) equation
\begin{equation}
\label{dp}
m_t+um_x+3mu_x=0,
\end{equation}
which was discovered by Degasperi and Procesi \cite{dp} in 1998 as one of the three equations to satisfy asymptotic integrability conditions in the following family of equations
\begin{equation}
\label{gendp}
u_t+c_0u_x+\gamma u_{xxx}-\alpha^2u_{xxt} = (c_1u^2+c_2u_x^2+c_3uu_x)_x,
\end{equation}
where $\gamma, \alpha, c_0, c_1, c_2, c_3 \in \rr$ are constants.  The other integrable members of \eqref{gendp} are the CH and KdV equations.  Furthermore, we note that the DP equation also posesses peaked solitons of the form $u_c(x,t) = ce^{-|x-ct|}$and if we let $a=b \in \rr$, in this case, we obtain the $b$-family for which the CH and DP equations are the only integrable members.

For $u=v$, $a=b=3$, and $p=q=2$, we obtain the Novikov equation (NE)
\begin{equation}
\label{nov}
m_t+m_xu^2+3muu_x=0,
\end{equation}
which was discovered by Vladimir Novikov in \cite{nov} as he was investigating the integrability of Camassa-Holm type equations of the form
\begin{equation}
(1-\p_x^2)u_t = P(u,u_x,u_{xx}, \dots),
\end{equation}
where $P$ is a polynomial of $u$ and its derivatives.  Like CH and DP equations, NE also exhibits peaked solitary wave solutions of the form $u_c(x,t) = \sqrt ce^{-|x-ct|}$.

In recent years, the Camassa-Holm type equations have been generalized into integrable two component systems such as the CH2 equation,
\begin{equation}
\label{ch2}
\begin{cases}m_t+um_x+2mu_x+\s\rho\rho_x=0, \\  \rho_t+(u\rho)_x=0, \end{cases}
\end{equation}
where $\s=\pm 1$.  Here $u(x,t)$ represents the horizontal velocity of the fluid and $\rho(x,t)$ is the horizontal deviation of the surface from equilibrium.

Constantin and Ivanov \cite{const} demonstrated how the system \eqref{ch2} is derived from shallow water wave theory through various perturbations of the Green-Naghdi equations.  Additional physical relevance was provided under Chen, Liu and Zhang \cite{clz} when they connected CH2 with the time dependent Schr\"{o}dinger spectral problem. The 2-component system \eqref{ch2} was also shown to be completely integrable in Falqui \cite{faq} and \cite{shab} and its bi-Hamiltonian structure was subsequently determined.  We now also know the system can be identified with the first negative flow of the AKNS hierarchy and possesses the peakon and multi-kink solutions \cite{clz}.

System \eqref{gench}, however, was motivated by the work of Cotter, Holm, Ivanov, and Percival on the Cross-Coupled Camassa-Holm system (CCCH) in \cite{chip}.  This system is realized when we take $a=b=2$ and $p=q=1$.  In \cite{eik}, a geometrical interpretation of CCCH is given along with a large class of peakon equations.  Results regarding wave breaking, continuity and analyticity of the data-to-solution map, and persistence properties for the CCCH system can be found in \cite{hhi, xliu, zhou1, zhou2}.

Recently, well-posedness of system \eqref{gench} was established in the product Besov spaces $B_{l,r}^s \times B_{l,r}^{s}$ for $s>\max\{2+1/l,3-1/l\}$ in \cite{pzq}.  In this paper we seek to use this result corresponding to product Sobolev spaces $H^s \times H^s$ to prove nonuniform dependence in a peridoc setting.  This is achieved by the use of approximate solutions, a solution size estimate in product Sobolev spaces found in \cite{zhou3}, and by exploiting the nonlocal form of the system which can be written as
\begin{equation}
\label{nl}
\begin{cases}
u_t+v^pu_x+I_1(u,v) = 0, \\ 
v_t+u^qv_x+I_2(u,v) = 0,
\end{cases}
\end{equation}
where
\begin{align}
&I_1(u,v) = (1-\p_x^2)^{-1}\left[\frac ap(v^p)_xu+\frac{p-a}{p}(v^p)_xu_{xx}\right]+(1-\p_x^2)^{-1}\p_x\left((v^p)_xu_x\right), \\ 
&I_2(u,v) = (1-\p_x^2)^{-1}\left[\frac bq(u^q)_xv+\frac{q-b}{q}(u^q)_xv_{xx}\right]+(1-\p_x^2)^{-1}\p_x\left((u^q)_xv_x\right),
\end{align}
$(1-\p_x^2)^{-1}f = G*f$ where $G$ is the Green's function $G=\frac12e^{-|x|}$ and $\mathcal{F}[G*f]= \frac1{1+\xi^2} \wh f(\xi)$ for any test function $f$. More generally, for any real number $s$, we will denote the operator $\lambda ^s = (1-\p_x^2)^s$, and note that $\| u\|_{H^s} = \| \lambda^s u\|_{L^2}$.  For brevity of notation, we will write $H^s(\ci)=H^s$ and 
\[
\|(u,v)\|_s = \|u\|_{H^s}+\|v\|_{H^s}.
\]

The continuity of the data-to-solution map plays a crucial role in well-posedness theory and the sharpness in regularity of said map has been investigated by multiple authors.  Indeed, Himonas et. al. produced sharpness results for equations such as CH, Novikov, Degasperis-Procesi and generalized CH-type-equations \cite{hh1} - \cite{holmes} in Sobolev spaces $H^s$ by showing that the data to solution maps were not uniformly continuous from $H^s$ into $C([0,T);H^s)$.  This methodology was further extended to systems and similar results were produced for the CH2 system in \cite{rct}, the Euler-Poisson system in \cite{holmtig}, and R2CH by Yang in \cite{yang} and Weng et. al. in \cite{wzzwm} in the product Sobolev spaces $H^s \times H^{s-1}$.  We draw our inspiration from these authors and prove the following result.

\begin{thm}
\label{nonunif}
Assume that $s>5/2$. Then the data-to-solution map corresponding to the generalized two component Camassa-Holm system \eqref{gench} is not uniformly continuous from $H^s(\ci) \times H^s(\ci)$ into $C([0,T]; H^s(\ci) \times H^{s}(\ci))$.
\end{thm}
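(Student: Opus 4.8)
The plan is to run the method of high-frequency approximate solutions, in the form used by Himonas and collaborators and adapted to two-component systems in \cite{rct,yang,wzzwm}. For a parameter $\omega\in\{1,2\}$ and an integer $n\gg1$, set
\[
u^{\omega,n}(x,t)=\omega\,n^{-1/q}+n^{-s}\cos\!\big(nx-\omega^{p}t\big),\qquad
v^{\omega,n}(x,t)=\omega\,n^{-1/p}+n^{-s}\cos\!\big(nx-\omega^{q}t\big).
\]
The low-frequency amplitudes $\omega n^{-1/q}$ and $\omega n^{-1/p}$ are chosen so that, to leading order, $(v^{\omega,n})^{p}\approx\omega^{p}n^{-1}$ and $(u^{\omega,n})^{q}\approx\omega^{q}n^{-1}$; the phase speeds $\omega^{p},\omega^{q}$ are then exactly what makes the transport terms $v^{p}u_{x}$ and $u^{q}v_{x}$ in \eqref{nl} cancel $u_{t}$ and $v_{t}$ at the principal order $O(n^{-s})$. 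Two distinct values $\omega=1,2$ are used so that $1^{p}\ne2^{p}$ and $1^{q}\ne2^{q}$ (one cannot take $\omega=\pm1$, since $(\pm1)^{p}$ coincide when $p$ is even). The first step is a direct computation: using that $H^{s}$ is a Banach algebra for $s>1/2$ together with the two-derivative gain of $(1-\p_x^2)^{-1}$ in $I_{1},I_{2}$, one checks that the residuals
\[
F_{1}^{\omega,n}:=\p_{t}u^{\omega,n}+(v^{\omega,n})^{p}\p_{x}u^{\omega,n}+I_{1}(u^{\omega,n},v^{\omega,n}),\qquad
F_{2}^{\omega,n}:=\p_{t}v^{\omega,n}+(u^{\omega,n})^{q}\p_{x}v^{\omega,n}+I_{2}(u^{\omega,n},v^{\omega,n})
\]
satisfy $\|F_{1}^{\omega,n}(t)\|_{H^{s-1}}+\|F_{2}^{\omega,n}(t)\|_{H^{s-1}}\lesssim n^{-\ee_{0}}$ uniformly on $[0,T]$ for some $\ee_{0}>1$, while $\|u^{\omega,n}\|_{H^{s}}+\|v^{\omega,n}\|_{H^{s}}\lesssim1$ and $\|u^{\omega,n}\|_{H^{s+1}}+\|v^{\omega,n}\|_{H^{s+1}}\lesssim n$.

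Since the data $(u^{\omega,n}(\cdot,0),v^{\omega,n}(\cdot,0))$ are bounded in $H^{s}\times H^{s}$, the local well-posedness of \cite{pzq} specialized to $H^{s}=B^{s}_{2,2}$ (valid for $s>5/2$) together with the solution-size estimate of \cite{zhou3} produces genuine solutions $(u_{\omega,n},v_{\omega,n})$ on a common interval $[0,T]$ with $\|u_{\omega,n}(t)\|_{H^{s}}+\|v_{\omega,n}(t)\|_{H^{s}}\lesssim1$; since the data are also bounded (by $\lesssim n$) in $H^{s+1}$, persistence of regularity keeps the solutions bounded in $H^{s+1}$ on the same interval, with $\|u_{\omega,n}(t)\|_{H^{s+1}}+\|v_{\omega,n}(t)\|_{H^{s+1}}\lesssim n$. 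Next I would show the genuine solutions track the approximate ones in $H^{s}$. Writing $\tilde u=u^{\omega,n}-u_{\omega,n}$, $\tilde v=v^{\omega,n}-v_{\omega,n}$ (which vanish at $t=0$) and subtracting the two realizations of \eqref{nl}, one gets a coupled system for $(\tilde u,\tilde v)$ driven by $(F_{1}^{\omega,n},F_{2}^{\omega,n})$. A $H^{s-1}$ energy estimate — in which the transport terms $(v^{\omega,n})^{p}\p_{x}\tilde u$ and $(u^{\omega,n})^{q}\p_{x}\tilde v$ are handled by the usual Kato–Ponce commutator and integration-by-parts bounds, and the remaining terms (the coupling difference $\big((v^{\omega,n})^{p}-(v_{\omega,n})^{p}\big)\p_{x}u_{\omega,n}$, the $I_{j}$-differences, etc.) are controlled by products in $H^{s-1}$ using $s-1>3/2$ — together with Gronwall's inequality yields $\|\tilde u(t)\|_{H^{s-1}}+\|\tilde v(t)\|_{H^{s-1}}\lesssim n^{-\ee_{0}}$ on $[0,T]$. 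Interpolating between $H^{s-1}$ and $H^{s+1}$ and inserting the bounds above,
\[
\|\tilde u(t)\|_{H^{s}}\le\|\tilde u(t)\|_{H^{s-1}}^{1/2}\|\tilde u(t)\|_{H^{s+1}}^{1/2}\lesssim n^{(1-\ee_{0})/2}\longrightarrow0\quad(n\to\infty),
\]
and likewise for $\tilde v$, so $(u_{\omega,n},v_{\omega,n})$ and $(u^{\omega,n},v^{\omega,n})$ coincide in $C([0,T];H^{s}\times H^{s})$ in the limit.

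Finally I would compare the two families. At $t=0$ the oscillatory parts agree, so
\[
\big\|(u^{1,n}-u^{2,n})(\cdot,0)\big\|_{H^{s}}+\big\|(v^{1,n}-v^{2,n})(\cdot,0)\big\|_{H^{s}}=\big(n^{-1/q}+n^{-1/p}\big)\|1\|_{H^{s}}\longrightarrow0 .
\]
For $t\in(0,T]$ the low-frequency parts still tend to $0$ in $H^{s}$, while the identity $\cos A-\cos B=-2\sin\frac{A+B}{2}\sin\frac{A-B}{2}$ and $n^{-s}\|\sin(n\cdot-\phi)\|_{H^{s}}\to\sqrt{\pi}$ give
\[
\big\|(u^{1,n}-u^{2,n})(\cdot,t)\big\|_{H^{s}}\longrightarrow 2\sqrt{\pi}\,\Big|\sin\tfrac{(2^{p}-1)t}{2}\Big| ,
\]
which is strictly positive for small $t>0$ because $2^{p}-1\ge1$. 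Combining this with the tracking estimate, for a fixed $t_{0}\in(0,T]$ small enough we obtain
\[
\limsup_{n\to\infty}\ \sup_{0\le t\le T}\big\|(u_{1,n}-u_{2,n},\,v_{1,n}-v_{2,n})(\cdot,t)\big\|_{s}\ \ge\ 2\sqrt{\pi}\,\Big|\sin\tfrac{(2^{p}-1)t_{0}}{2}\Big|>0 ,
\]
whereas the data converge to each other in $H^{s}\times H^{s}$; hence the data-to-solution map cannot be uniformly continuous, proving the theorem. The main obstacle is the energy step: a direct $H^{s}$ estimate for $(\tilde u,\tilde v)$ loses a derivative on the coupling term $\big((v^{\omega,n})^{p}-(v_{\omega,n})^{p}\big)\p_{x}u_{\omega,n}$, since it would require an $H^{s+1}$-bound on $u_{\omega,n}$, which only grows like $n$. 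One is therefore forced down to $H^{s-1}$, and must then ensure the approximate-solution residual decays strictly faster than $n^{-1}$ — this is where the precise exponents $n^{-1/p},n^{-1/q}$ in the ansatz and the restriction $s>5/2$ (the same threshold as in \cite{pzq}) are used — so that the interpolation against the $O(n)$ growth of the $H^{s+1}$-norms still gives a vanishing $H^{s}$-difference.
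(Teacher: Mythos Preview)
Your proposal is correct and follows the same approximate-solution strategy as the paper: build high-frequency approximate pairs with a low-frequency shift that makes the transport part cancel at leading order, estimate the residual, close an energy estimate for the difference between approximate and exact solutions in a lower Sobolev norm, and recover the $H^{s}$ gap by interpolation against a higher norm.

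The implementation differs in a few places. The paper takes $\omega\in\{-1,1\}$ and must then treat the case $p,q$ both even separately (using $\omega\in\{0,1\}$), whereas your choice $\omega\in\{1,2\}$ avoids this parity split altogether. The paper runs the difference estimate in $H^{\sigma}$ with $3/2<\sigma<\min(2,s-1)$ (so that the product Lemma~\ref{alg2} applies) and interpolates against $H^{k}$ with $k=[s]+2$; you instead work directly at $H^{s-1}$, where $s-1>3/2$ makes $H^{s-1}$ an algebra and the coupling term $\big((v^{\omega,n})^{p}-(v_{\omega,n})^{p}\big)\p_{x}u_{\omega,n}$ costs only $\|u_{\omega,n}\|_{H^{s}}\lesssim1$, and then interpolate against $H^{s+1}$ using persistence of regularity. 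Both routes hinge on the same quantitative point you identify at the end: the residual must decay faster than $n^{-1}$ in the low norm so that interpolation against the $O(n)$ growth of the high norm still yields $o(1)$ in $H^{s}$; this is exactly where the amplitudes $n^{-1/p},n^{-1/q}$ and the threshold $s>5/2$ enter in either version.
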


%
%
%
%
\section{Nonuniform Dependence in Sobolev Spaces on $\m{T}$}
\begin{thm}
If $s>5/2$ then the data-to-solution map for the 2-component generalized Camassa-Holm system defined by the Cauchy problem \eqref{nl} is not uniformly continuous from any bounded subset of $H^s \times H^{s}$ into $C([0,T];H^s \times H^{s})$.
\end{thm}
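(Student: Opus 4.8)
The plan is to use the method of approximate solutions, following the scheme of Himonas and collaborators and, most directly, the nonperiodic analysis of \eqref{gench} in \cite{zhou3}. Two facts are imported: local well-posedness of \eqref{nl} in $H^s\times H^s$ for $s>5/2$, with a lifespan $T$ that stays bounded below as long as the data remain in a fixed bounded subset of $H^s\times H^s$, and the a priori estimate $\|(u,v)(t)\|_s\lesssim\|(u_0,v_0)\|_s$ on $[0,T]$ coming from the energy method \cite{pzq,zhou3}; a standard persistence-of-regularity argument additionally gives $\|(u,v)(t)\|_{s+1}\lesssim\|(u_0,v_0)\|_{s+1}$ on $[0,T]$ whenever the data also lie in $H^{s+1}$. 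Because the nonlinearities of \eqref{gench} are polynomial, every step below stays in this range of $s$, and on $\ci$ no spatial cutoff is needed since constants belong to every $H^s$.

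First I would introduce, for $\om\in\{0,1\}$ and $n\in\zz^+$ large, the pair
\[
u^{\om,n}(x,t)=\om\, n^{-1/q}+n^{-s}\cos(nx-\om t),\qquad v^{\om,n}(x,t)=\om\, n^{-1/p}+n^{-s}\cos(nx-\om t).
\]
These satisfy $\|(u^{\om,n},v^{\om,n})(t)\|_s\lesssim 1$ uniformly in $n$ and $t$, so all of them lie in one bounded set, while at $t=0$ their oscillatory parts coincide, so $\|(u^{1,n}-u^{0,n},v^{1,n}-v^{0,n})(0)\|_s=\|(n^{-1/q},n^{-1/p})\|_s\to 0$. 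The exponents $1/q$ and $1/p$ are chosen precisely so that $(v^{\om,n})^p=\om\, n^{-1}+(\text{lower order})$ and $(u^{\om,n})^q=\om\, n^{-1}+(\text{lower order})$; this makes the oscillations travel with $t$-phase $\om t$ and lets the leading part of $v^p u^{\om,n}_x$ cancel $\p_t u^{\om,n}=\om\, n^{-s}\sin(nx-\om t)$ exactly, and symmetrically for the $v$-equation. Plugging $(u^{\om,n},v^{\om,n})$ into \eqref{nl}, I would call $E=(E_1,E_2)$ the residual and estimate it: after this cancellation, using the smoothing $\mathcal F[(1-\p_x^2)^{-1}f](\xi)=(1+\xi^2)^{-1}\wh f(\xi)$ (which contributes a factor $\sim n^{-2}$ on the frequency-$n$ and frequency-$2n$ pieces) and tracking which products sit at frequency $0$, $n$, or $2n$, one obtains $\|E(t)\|_{s-1}\lesssim n^{-r}$ with some $r>1$, uniformly for $t$ in bounded intervals; the slowest-decaying contributions come from the $(1-\p_x^2)^{-1}[(v^p)_x u_{xx}]$ and $(1-\p_x^2)^{-1}\p_x[(v^p)_x u_x]$ parts of $I_1$ and their $I_2$ analogues.

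Next I would compare $(u^{\om,n},v^{\om,n})$ with the genuine solution $(U^{\om,n},V^{\om,n})$ of \eqref{nl} issued from the same (smooth, uniformly $H^s$-bounded) data, which by the lifespan estimate exists on a common interval $[0,T]$ independent of $n$. The differences $w=U^{\om,n}-u^{\om,n}$ and $z=V^{\om,n}-v^{\om,n}$ vanish at $t=0$ and satisfy the difference equations, whose linear-in-$(w,z)$ part has the transport form $\p_t w+v^p\p_x w+(\text{nonlocal terms of order}\le 1)$, with forcing $-E_1$ and remainder terms at least quadratic in $(w,z)$. Running an $H^{s-1}$ energy estimate — integrating by parts on $v^p\p_x w$, using the $H^{s-1}$ algebra, the boundedness of $(1-\p_x^2)^{-1}\p_x^2$, and the a priori $H^s$ bounds on both $(U^{\om,n},V^{\om,n})$ and $(u^{\om,n},v^{\om,n})$ — and closing by Gr\"onwall gives $\|(w,z)(t)\|_{s-1}\lesssim \sup_{[0,T]}\|E\|_{s-1}\lesssim n^{-r}$. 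Since the data are $C^\infty$ with $\|(u_0,v_0)\|_{s+1}\simeq n$, persistence gives $\|(U^{\om,n},V^{\om,n})(t)\|_{s+1}\lesssim n$, and as $\|(u^{\om,n},v^{\om,n})(t)\|_{s+1}\simeq n$ as well we get $\|(w,z)(t)\|_{s+1}\lesssim n$; interpolating $H^s$ between $H^{s-1}$ and $H^{s+1}$ then yields $\|(w,z)(t)\|_s\lesssim n^{(1-r)/2}\to 0$ uniformly on $[0,T]$. Thus the genuine solutions stay $o(1)$-close to the explicit families.

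Finally I would read off nonuniform dependence. Shrinking $T$ if necessary so that $T<\pi$, for $t\in(0,T]$ the identity $\cos(nx-t)-\cos(nx)=2\sin(nx-t/2)\sin(t/2)$ gives
\[
\|u^{1,n}(t)-u^{0,n}(t)\|_{H^s}\ \ge\ n^{-s}\,\|\cos(nx-t)-\cos(nx)\|_{H^s}-n^{-1/q}\ =\ 2|\sin(t/2)|\, n^{-s}\,\|\sin(nx-t/2)\|_{H^s}-n^{-1/q},
\]
and since $\|\sin(nx-t/2)\|_{H^s}\simeq n^s$ on $\ci$ uniformly in $t$, the right-hand side is $\gtrsim|\sin(t/2)|$ for $n$ large. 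Combining with the previous step, $\liminf_{n\to\infty}\|(U^{1,n}-U^{0,n},V^{1,n}-V^{0,n})(t)\|_s\gtrsim|\sin(t/2)|>0$ for every $t\in(0,T]$, whereas $\|(U^{1,n}-U^{0,n},V^{1,n}-V^{0,n})(0)\|_s\to 0$; this is exactly the failure of uniform continuity on the bounded set, and the theorem follows. The main obstacle is the pair of middle steps: one must drive the residual $E$ to decay faster than $n^{-1}$ in the \emph{weak} norm $H^{s-1}$, so that after interpolating against the $H^{s+1}$ norm of the difference — which is only controlled by $n$ — the $H^s$ difference still vanishes; securing the rate $r>1$ is where the precise algebraic structure of $I_1,I_2$, and in particular the cancellation built into the drift amplitudes $n^{-1/q}$ and $n^{-1/p}$, must be used.
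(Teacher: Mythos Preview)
Your proposal is correct and follows the same approximate-solutions scheme as the paper: constant-plus-cosine ansatz with drifts $n^{-1/q},n^{-1/p}$ chosen so the leading transport term cancels $\p_t$, residual estimate, $H^\sigma$ energy estimate on the difference, a higher-norm bound via persistence of regularity, and interpolation to $H^s$.

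The technical choices differ in two places. First, you take $\omega\in\{0,1\}$ throughout, whereas the paper uses $\omega=\pm 1$ and then has to split on the parity of $p,q$ (falling back to $\omega\in\{0,1\}$ only when both are even); your choice makes the two phases coincide, so products like $\cos(nx-\omega^pt)\sin(nx-\omega^qt)$ never throw off a zero-frequency piece, which is why you avoid the $n^{1/p-2s+2}$ term that dominates the paper's $\|E_{nl}^1\|_{H^\sigma}$. Second, the paper runs the difference energy estimate at a fixed low index $3/2<\sigma<2$ (so that the specific product Lemma~\ref{alg2} applies to the $\p_x^2 w$ terms inside $(1-\p_x^2)^{-1}$) and interpolates against $H^{[s]+2}$; you work directly at $H^{s-1}$ and interpolate against $H^{s+1}$. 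Your route is more economical but does require, in place of Lemma~\ref{alg2}, the general bilinear estimate $\|fg\|_{H^{s-3}}\lesssim\|f\|_{H^{s-1}}\|g\|_{H^{s-3}}$ (valid on $\ci$ for $s>5/2$) to close the $\p_x^2 w$ contributions; it would be worth stating this explicitly. With that, the claimed rate $r>1$ for $\|E\|_{H^{s-1}}$ indeed holds (the worst surviving pieces are of order $n^{1/p-s-1}$ and $n^{1/p-1/q-3}$), and the interpolation $\|(w,z)\|_{H^s}\lesssim n^{(1-r)/2}\to 0$ goes through.
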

It suffices to show that there exists two sequences of solutions $(u_n,v_n)$ and $(w_n,y_n)$ in $C([0,T];H^s \times H^{s})$ such that
\begin{align*}
&\|(u_n,v_n)\|_s+\|(w_n,y_n)\|_s \lesssim 1 \\
&\lim_{n \rightarrow \infty} \|(u_n(0)-w_n(0),v_n(0)-y_n(0))\|_s=0,
\end{align*}
and
\begin{equation*}
\liminf_{n \rightarrow \infty} \|(u_n(t)-w_n(t),v_n(t)-y_n(t))\|_s \gtrsim |\sin t|, \ \ \ 0\leq t \leq T <1.
\end{equation*}
\textbf{Approximate 2-Component gen-CH solutions.}  We consider the approximate solutions of the form
\begin{align}
\label{as}
&u^{\omega,n}(x,t)=\omega n^{-\frac1q}+n^{-s}\cos (nx-\omega^p t) \nonumber \\
&v^{\omega,n}(x,t)=\omega n^{-\frac1p}+n^{-s}\cos (nx-\omega^q t).
\end{align}
where $n \in \m{Z}^+$ and $\omega=\pm 1$.  Substituting \eqref{as} into the 2-component CH system generates the following expressions for the errors $E$ and $F$:
\begin{align*}
&E\doteq\partial_tu^{\omega,n}+(v^{\omega,n})^p\partial_xu^{\omega,n}+I_1(u^{\omega,n},v^{\omega,n}) \\
&F\doteq\partial_tv^{\omega,n}+(u^{\omega,n})^q\partial_xv^{\omega,n}+I_2(u^{\omega,n},v^{\omega,n}).
\end{align*}
To estimate the $H^\s$ norm of these errors, we use the following fact.  For $\s \in \m{R}$, $n \in \m{Z}^+$ and $n>>1$, we have
\begin{equation}
\label{cosest}
\|\cos(nx-\alpha)\|_{H^\s} \approx n^\s, \ \ \alpha \in \m{R}.
\end{equation}
The relation also holds if cosine is replaced by sine.  Hence, for $s\geq 0$ and $n>>1$ we have
\begin{align*}
&\|u^{\omega,n}(t)\|_{H^\s}=\|\omega n^{-\frac1q}+n^{-s}\cos (nx-\omega^p t)\|_{H^\s} \lesssim n^{-\frac1q}+n^{-s+\s} \\ 
&\|v^{\omega,n}(t)\|_{H^\s}=\|\omega n^{-\frac1p}+n^{-s}\cos (nx-\omega^q t)\|_{H^\s} \lesssim n^{-\frac1p}+n^{-s+\s}.
\end{align*}

\begin{lemma}
\label{erest}
For $n>>1$ and $s>5/2$,
\begin{align}
\label{eest}
&\|E(t)\|_{H^\s} \lesssim n^{r_{s,\s}} \\
\label{fest}
&\|F(t)\|_{H^{\s}} \lesssim n^{j_{s,\s}}
\end{align}
where
\begin{equation*}
r_{s,\s} = \begin{cases} \frac1p-2s+2, \ \ \ \ \ \ \ \ \ \ \ \     s<\frac1q-\s+4 \\ \frac1p-\frac1q-s+\s-2, \ \ \ s>\frac1q-\s+4 \end{cases}
\end{equation*}
and
\begin{equation*}
j_{s,\s} = \begin{cases} \frac1q-2s+2, \ \ \ \ \ \ \ \ \ \ \ \     s<\frac1p-\s+4 \\ \frac1q-\frac1p-s+\s-2, \ \ \ s>\frac1p-\s+4. \end{cases}
\end{equation*}
\end{lemma}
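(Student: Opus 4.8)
The plan is to substitute \eqref{as} directly into $E$ and $F$, reduce each to a finite trigonometric polynomial acted on by $\lambda^{-1}=(1-\p_x^2)^{-1}$ and $\lambda^{-1}\p_x$, and then estimate term by term using \eqref{cosest}. Write $u=u^{\omega,n}$, $v=v^{\omega,n}$, $\psi=nx-\omega^p t$, $\phi=nx-\omega^q t$, so that $u=\omega n^{-1/q}+n^{-s}\cos\psi$ and $v=\omega n^{-1/p}+n^{-s}\cos\phi$. First I would expand $v^p$ by the binomial theorem,
\[
v^p=\omega^p n^{-1}+p\,\omega^{p-1}n^{\frac1p-1-s}\cos\phi+\cdots,
\]
a sum of $p+1$ terms whose $k$-th summand carries the weight $n^{-1+k/p-ks}$ and has spatial frequency at most $kn$; then $(v^p)_x$, $(v^p)_{xx}$, together with $u$, $u_x=-n^{1-s}\sin\psi$ and $u_{xx}=-n^{2-s}\cos\psi$, are all explicit finite trigonometric polynomials with known $n$-weights. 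The key observation is the cancellation built into \eqref{as}: the leading piece $\omega^p n^{-1}\p_x u=-\omega^p n^{-s}\sin\psi$ of $v^p\p_x u$ is exactly $-\p_t u$, so that $\p_t u+v^p\p_x u$ collapses to an explicit sum of lower-order terms.

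Next I would expand $I_1(u,v)$ the same way, applying the product-to-sum identities to the products $(v^p)_x\cdot u$, $(v^p)_x\cdot u_{xx}$ and $(v^p)_{xx}\cdot u_x$. Every resulting contribution to $I_1$ is then $\lambda^{-1}$ (or $\lambda^{-1}\p_x$) applied either to a term $c\,\{\cos,\sin\}(knx-\beta t)$ with $1\le k\le p+2$ and $\beta\in\rr$, or to a term that is constant in $x$ (the difference-angle half of a product of two frequency-$n$ factors, hence bounded uniformly in $t$). The per-term estimates are now elementary and uniform for $t\in[0,T]$: by \eqref{cosest}, $\|c\,\cos(knx-\beta t)\|_{H^\s}\approx|c|\,n^\s$, while $\lambda^{-1}$ contributes a gain $(1+k^2n^2)^{-1}\approx n^{-2}$ and $\lambda^{-1}\p_x$ a gain $\approx n^{-1}$; a term constant in $x$ has $H^\s$-norm $\approx|c|$, is unchanged by $\lambda^{-1}$, and is annihilated by $\lambda^{-1}\p_x$.

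The crux is the bookkeeping that pins down the two dominant terms. The largest frequency-$n$ contribution should come from the part of $\tfrac ap\lambda^{-1}\big((v^p)_x u\big)$ in which the leading frequency-$n$ part of $(v^p)_x$ (weight $n^{1/p-s}$) multiplies the constant $\omega n^{-1/q}$ of $u$; after $\lambda^{-1}$ this has $H^\s$-norm $\approx n^{1/p-1/q-s}\cdot n^{\s-2}=n^{1/p-1/q-s+\s-2}$. The largest $x$-independent contribution should come from the second-derivative products $(v^p)_x u_{xx}$ and $(v^p)_{xx}u_x$, which produce a term of weight $n^{1/p-2s+2}$, bounded uniformly in $t$. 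Comparing the exponents $1/p-1/q-s+\s-2$ and $1/p-2s+2$ yields exactly the threshold $s=1/q-\s+4$ and the two branches of $r_{s,\s}$, and I expect the remaining terms — the higher-frequency ($k\ge2$) pieces, the higher binomial terms, and the leftover part of $\p_t u+v^p\p_x u$ — to carry strictly smaller powers of $n$ once $s>5/2$ and $p,q\ge1$ are invoked; this establishes \eqref{eest}, and \eqref{fest} follows from the identical computation after interchanging $(u,p,a)$ with $(v,q,b)$, which swaps $p$ and $q$ in the exponent. The step I expect to be the real obstacle is precisely this bookkeeping: one must keep exact track of which products of two oscillatory factors fall onto the zero Fourier mode — and hence escape the smoothing of $\lambda^{-1}$ — and of the $n$-power attached to each binomial coefficient $(\omega n^{-1/p})^{p-k}$, since the target exponents are essentially sharp.
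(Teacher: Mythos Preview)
Your proposal is correct and follows essentially the same route as the paper: binomial expansion of $v^p$ (resp.\ $u^q$), the same cancellation $\p_t u+\omega^p n^{-1}\p_x u=0$ in the Burgers part, product-to-sum reduction of the remaining trigonometric products, and then term-by-term $H^\s$ estimates via \eqref{cosest} together with the $n^{-2}$ (resp.\ $n^{-1}$) gain from $\lambda^{-1}$ (resp.\ $\lambda^{-1}\p_x$) on nonzero modes. The two dominant contributions you isolate --- the frequency-$n$ piece of $\lambda^{-1}\big((v^p)_x\cdot\omega n^{-1/q}\big)$ and the zero-mode piece of $\lambda^{-1}\big((v^p)_x u_{xx}\big)$ --- are exactly the terms the paper tracks (note that $(v^p)_{xx}u_x$ does not actually appear in $I_1$, so the $n^{1/p-2s+2}$ term comes solely from the $(p-a)(v^p)_x u_{xx}$ piece inside $\lambda^{-1}$; this is a harmless slip in your description).
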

\begin{proof}
First we note the following derivatives of our approximate solutions
\begin{align*}
&\p_tu^{\omega,n}(x,t)=\omega^p n^{-s}\sin(nx-\omega^p t) \\ 
&\p_xu^{\omega,n}(x,t)=-n^{-s+1}\sin(nx-\omega^p t) \\ 
&\p_x^2u^{\omega,n}(x,t)=-n^{-s+2}\cos(nx-\omega^p t) \\ 
&\p_tv^{\omega,n}(x,t)=\omega^q n^{-s}\sin(nx-\omega^q t) \\ 
&\p_xv^{\omega,n}(x,t)=-n^{-s+1}\sin(nx-\omega^q t) \\ 
&\p_x^2v^{\omega,n}(x,t)=-n^{-s+2}\cos(nx-\omega^q t).
\end{align*}
Let $E_B$ denote the Burgers term portion of the error $E$ and $E_{nl}$ denote the nonlocal portion.  We also note that by binomial theorem, we have 
\[
(v^{\om, n})^p = \sum_{k=0}^p \begin{pmatrix} p \\ k \end{pmatrix} (\om n^{-\frac1p})^{p-k}(n^{-s}\cos(nx-\om^qt))^k,
\]
in which the first term of this summation is $\om^pn^{-1}$.  We consider this first time multiplied by the derivative $\p_xu^{\om,n}$ and added to $\p_tu^{\om,n}$ to see that
\[
\p_tu^{\om,n}+(\om^pn^{-1})\p_xu^{\om, n} = \omega^p n^{-s}\sin(nx-\omega^p t)+\om^pn^{-1}(-n^{-s+1}\sin(nx-\omega^p t)) = 0.
\]
Therefore, in the Burgers term of the first component we are left with 
\begin{align}
E_B &= \left(\sum_{k=1}^p \begin{pmatrix} p \\ k \end{pmatrix} (\om n^{-\frac1p})^{p-k}(n^{-s}\cos(nx-\om^qt))^k\right)\left(-n^{-s+1}\sin(nx-\omega^p t)\right) \nonumber \\ 
& = -\sum_{k=1}^p \begin{pmatrix} p \\ k \end{pmatrix} \om^{p-k}n^{\frac kp-ks-s}\cos^k(nx-\om^qt)\sin(nx-\omega^p t).
\end{align}
Since we have that $n>>1$, we may just consider the first term as the others are negligible.  At $k=1$ in the summation we have that
\[
E_B = -p\om^{p-1}n^{\frac1p-2s}\cos(nx-\omega^q t)\sin(nx-\omega^p t)+(\text{negligible terms}).
\]
Using a well-known identity that
\[
\cos\theta\sin\vp = \frac12[\sin(\vp+\theta)+\sin(\vp-\theta)],
\]
we find that
\[
E_B = -\frac12\om^{p-1}n^{\frac1p-2s}[\sin(\vp+\theta)+\sin(\vp-\theta)]+(\text{negligible terms}),
\]
where
\[
\vp = nx-\om^pt \ \ \text{and} \ \ \theta = nx-\om^qt.
\]
Estimating the Burgers term $E_B$ in the $H^\s$ norm we find that
\begin{equation}
\label{eb}
\|E_B\|_{H^\s} \lesssim n^{\frac1p-2s+\s}.
\end{equation}
We now estimate the non-local term of the first component in the $H^\s$ norm.  We may break this down into two terms $E_{nl}^1$ and $E_{nl}^2$, where
\begin{align}
\label{enl12}
&E_{nl}^1 = (1-\p_x^2)^{-1}\left[(v^{\om,n})^{p-1}\p_xv^{\om,n}(au^{\om,n}+(p-a)\p_x^2u^{\om,n})\right] \nonumber \\ 
&E_{nl}^2 = p\p_x(1-\p_x^2)^{-1}\left[(v^{\om,n})^{p-1}\p_xv^{\om,n}\p_xu^{\om,n}\right].
\end{align}
Throughout the nonlocal term we see the presence of the term $(v^{\om,n})^{p-1}\p_xv^{\om,n}$.  Therefore, we are led to examine this term first.  Indeed, we see via the binomial theorem that we have,
\begin{align*}
(v^{\om, n})^{p-1}\p_xv^{\om,n} &= \sum_{k=0}^{p-1} \begin{pmatrix} p-1 \\ k \end{pmatrix} (\om n^{-\frac1p})^{p-1-k}(n^{-s}\cos(nx-\om^qt))^k(-n^{-s+1}\sin(nx-\omega^q t)) \\ 
&=-\sum_{k=0}^{p-1} \begin{pmatrix} p-1 \\ k \end{pmatrix} \om^{p-1-k} n^{(k+1)(\frac1p-s)}\cos^k(nx-\om^qt)\sin(nx-\omega^q t).
\end{align*}
Note that since $n>>1$, we have the dominant term out of the summation to be at $k=0$ and therefore may write
\[
(v^{\om, n})^{p-1}\p_xv^{\om,n} = -\om^{p-1}n^{\frac1p-s}\sin(nx-\omega^q t) + (\text{negligible terms}).
\]
In the first portion of our nonlocal term we also see 
\begin{align*}
au^{\om,n}+(p-a)\p_x^2u^{\om,n} = a(\omega n^{-\frac1q}+n^{-s}\cos (nx-\omega^p t))-(p-a)(n^{-s+2}\cos(nx-\omega^p t)),
\end{align*}
which after being multiplied by $(v^{\om, n})^{p-1}\p_xv^{\om,n}$ we see that
\begin{align}
(v^{\om, n})^{p-1}&\p_xv^{\om,n}(au^{\om,n}+(p-a)\p_x^2u^{\om,n}) \nonumber \\ 
&=-a\om^pn^{\frac1p-\frac1q-s}\sin(nx-\omega^q t)-a\om^{p-1}n^{\frac1p-2s}\cos(nx-\omega^p t)\sin(nx-\omega^q t) \nonumber \\ 
& \ \ \ +(p-a)\om^{p-1}n^{\frac1p-2s+2}\cos(nx-\omega^p t)\sin(nx-\omega^q t)+(\text{negligible terms}).
\end{align}
By applying the $H^\s$ norm, we find that 
\begin{equation}
\label{enl1}
\|E_{nl}^1\|_{H^\s} \lesssim n^{\frac1p-\frac1q-s+\s-2}+n^{\frac1p-2s+\s}+n^{\frac1p-2s+2}+(\text{negligible terms}).
\end{equation}

We now consider the second part of our nonlocal term $E_{nl}^2$ and find that 
\begin{align}
(v^{\om, n})^{p-1}&\p_xv^{\om,n}\p_xu^{\om,n} \nonumber \\ 
& = \left[-\om^{p-1}n^{\frac1p-s}\sin(nx-\omega^q t) + (\text{negligible terms})\right] \times \left[-n^{-s+1}\sin(nx-\omega^p t)\right] \nonumber \\ 
&=\om^{p-1}n^{\frac1p-2s+1}\sin(nx-\omega^p t)\sin(nx-\omega^q t) + (\text{negligible terms}) \nonumber \\ 
&=\frac12\om^{p-1}n^{\frac1p-2s+1}\left[\cos(\theta-\vp)-\cos(\theta+\vp)\right] + (\text{negligible terms}),
\end{align}
where $\theta = nx-\om^pt$ and $\vp = nx-\om^qt$.

Applying the $H^\s$ norm to $E_{nl}^2$ we find that
\begin{equation}
\label{enl2}
\|E_{nl}^2\|_{H^\s} \lesssim n^{\frac1p-2s+1}+n^{\frac1p-2s-1+\s}.
\end{equation}
Combining estimates \eqref{eb}, \eqref{enl1}, and \eqref{enl2} and considering that we usually measure in the $H^\s$ norm with $\s<1$, we find that
\[
\|E\|_{H^\s} \lesssim \begin{cases} \frac1p-2s+2, \ \ \ \ \ \ \ \ \ \ \ \     s<\frac1q-\s+4 \\ \frac1p-\frac1q-s+\s-2, \ \ \ s>\frac1q-\s+4. \end{cases}
\]

We now estimate the error of the second component $F$ in the $H^\s$ norm.  Let $F_B$ denote the Burgers term portion of the error $F$ and $F_{nl}$ denote the nonlocal portion.  We also note that by binomial theorem, we have 
\[
(u^{\om, n})^q = \sum_{k=0}^q \begin{pmatrix} q \\ k \end{pmatrix} (\om n^{-\frac1q})^{q-k}(n^{-s}\cos(nx-\om^pt))^k,
\]
in which the first term of this summation is $\om^qn^{-1}$.  We consider this first time multiplied by the derivative $\p_xv^{\om,n}$ and added to $\p_tv^{\om,n}$ to see that
\[
\p_tv^{\om,n}+(\om^qn^{-1})\p_xv^{\om, n} = \omega^q n^{-s}\sin(nx-\omega^q t)+\om^qn^{-1}(-n^{-s+1}\sin(nx-\omega^q t)) = 0.
\]
Therefore, in the Burgers term of the first component we are left with 
\begin{align}
F_B &= \left(\sum_{k=1}^q \begin{pmatrix} q \\ k \end{pmatrix} (\om n^{-\frac1q})^{q-k}(n^{-s}\cos(nx-\om^pt))^k\right)\left(-n^{-s+1}\sin(nx-\omega^q t)\right) \nonumber \\ 
& = -\sum_{k=1}^q \begin{pmatrix} q \\ k \end{pmatrix} \om^{q-k}n^{\frac kq-ks-s}\cos^k(nx-\om^pt)\sin(nx-\omega^q t).
\end{align}
Since we have that $n>>1$, we may just consider the first term as the others are negligible.  At $k=1$ in the summation we have that
\[
F_B = -q\om^{p-1}n^{\frac1q-2s}\cos(nx-\omega^p t)\sin(nx-\omega^q t)+(\text{negligible terms}).
\]
Using a well-known identity that
\[
\cos\theta\sin\vp = \frac12[\sin(\vp+\theta)+\sin(\vp-\theta)],
\]
we find that
\[
F_B = -\frac12\om^{p-1}n^{\frac1q-2s}[\sin(\vp+\theta)+\sin(\vp-\theta)]+(\text{negligible terms}),
\]
where
\[
\vp = nx-\om^qt \ \ \text{and} \ \ \theta = nx-\om^pt.
\]
Estimating the Burgers term $F_B$ in the $H^\s$ norm we find that
\begin{equation}
\label{fb}
\|F_B\|_{H^\s} \lesssim n^{\frac1q-2s+\s}.
\end{equation}
We now estimate the non-local term of the first component in the $H^\s$ norm.  We may break this down into two terms $F_{nl}^1$ and $F_{nl}^2$, where
\begin{align}
\label{enl12}
&F_{nl}^1 = (1-\p_x^2)^{-1}\left[(u^{\om,n})^{q-1}\p_xu^{\om,n}(bv^{\om,n}+(q-b)\p_x^2v^{\om,n})\right] \nonumber \\ 
&F_{nl}^2 = q\p_x(1-\p_x^2)^{-1}\left[(u^{\om,n})^{q-1}\p_xu^{\om,n}\p_xv^{\om,n}\right].
\end{align}
Throughout the nonlocal term we see the presence of the term $(u^{\om,n})^{q-1}\p_xu^{\om,n}$.  Therefore, we are led to examine this term first.  Indeed, we see via the binomial theorem that we have,
\begin{align*}
(u^{\om, n})^{q-1}\p_xu^{\om,n} &= \sum_{k=0}^{q-1} \begin{pmatrix} q-1 \\ k \end{pmatrix} (\om n^{-\frac1q})^{q-1-k}(n^{-s}\cos(nx-\om^pt))^k(-n^{-s+1}\sin(nx-\omega^p t)) \\ 
&=-\sum_{k=0}^{q-1} \begin{pmatrix} q-1 \\ k \end{pmatrix} \om^{q-1-k} n^{(k+1)(\frac1q-s)}\cos^k(nx-\om^pt)\sin(nx-\omega^p t).
\end{align*}
Note that since $n>>1$, we have the dominant term out of the summation to be at $k=0$ and therefore may write
\[
(u^{\om, n})^{q-1}\p_xu^{\om,n} = -\om^{q-1}n^{\frac1q-s}\sin(nx-\omega^p t) + (\text{negligible terms}).
\]
In the first portion of our nonlocal term we also see 
\begin{align*}
bv^{\om,n}+(q-b)\p_x^2v^{\om,n} = b(\omega n^{-\frac1p}+n^{-s}\cos (nx-\omega^q t))-(q-b)(n^{-s+2}\cos(nx-\omega^q t)),
\end{align*}
which after being multiplied by $(u^{\om, n})^{q-1}\p_xu^{\om,n}$ we see that
\begin{align}
(u^{\om, n})^{q-1}&\p_xu^{\om,n}(bv^{\om,n}+(q-b)\p_x^2v^{\om,n}) \nonumber \\ 
&=-b\om^qn^{\frac1q-\frac1p-s}\sin(nx-\omega^p t)-b\om^{q-1}n^{\frac1q-2s}\cos(nx-\omega^q t)\sin(nx-\omega^p t) \nonumber \\ 
& \ \ \ +(q-b)\om^{q-1}n^{\frac1q-2s+2}\cos(nx-\omega^q t)\sin(nx-\omega^p t)+(\text{negligible terms}).
\end{align}
By applying the $H^\s$ norm, we find that 
\begin{equation}
\label{fnl1}
\|F_{nl}^1\|_{H^\s} \lesssim n^{\frac1q-\frac1p-s+\s-2}+n^{\frac1q-2s+\s}+n^{\frac1q-2s+2}+(\text{negligible terms}).
\end{equation}
We now consider the second part of our nonlocal term $F_{nl}^2$ and find that 
\begin{align}
(u^{\om, n})^{q-1}&\p_xu^{\om,n}\p_xv^{\om,n} \nonumber \\ 
& = \left[-\om^{q-1}n^{\frac1q-s}\sin(nx-\omega^p t) + (\text{negligible terms})\right] \times \left[-n^{-s+1}\sin(nx-\omega^q t)\right] \nonumber \\ 
&=\om^{q-1}n^{\frac1q-2s+1}\sin(nx-\omega^p t)\sin(nx-\omega^q t) + (\text{negligible terms}) \nonumber \\ 
&=\frac12\om^{q-1}n^{\frac1q-2s+1}\left[\cos(\theta-\vp)-\cos(\theta+\vp)\right] + (\text{negligible terms}),
\end{align}
where $\theta = nx-\om^pt$ and $\vp = nx-\om^qt$.

Applying the $H^\s$ norm to $F_{nl}^2$ we find that
\begin{equation}
\label{fnl2}
\|E_{nl}^2\|_{H^\s} \lesssim n^{\frac1q-2s+1}+n^{\frac1q-2s-1+\s}.
\end{equation}
Combining estimates \eqref{fb}, \eqref{fnl1}, and \eqref{fnl2} and considering that we usually measure in the $H^\s$ norm with $\s<1$, we find that
\[
\|F\|_{H^\s} \lesssim \begin{cases} \frac1q-2s+2, \ \ \ \ \ \ \ \ \ \ \ \     s<\frac1p-\s+4 \\ \frac1q-\frac1p-s+\s-2, \ \ \ s>\frac1p-\s+4. \end{cases}
\]

\end{proof}

\textbf{Actual 2-Component genCH solutions.}  Let $(u_{\omega,n}(x,t),v_{\omega,n}(x,t))$ be the solution to the 2-Component generalized CH system i.v.p. \eqref{nl} with the initial data given by the approximate solution $(u^{\omega,n}(x,0),v^{\omega,n}(x,0))$; i.e., $(u_{\omega,n}(x,t),v_{\omega,n}(x,t))$ solves the Cauchy problem
\begin{align}
\label{apu}
&\partial_tu_{\omega,n}+v_{\omega,n}^p\partial_xu_{\omega,n}+I_1(u_{\om,n},v_{\om,n})=0 \\ 
\label{apthe}
&\partial_tv_{\omega,n}+u_{\om,n}^q\partial_xv_{\om,n}+I_2(u_{\om,n},v_{\om,n})=0 \\ 
\label{apid1}
&u_{\omega,n}(x,0)=\om n^{-\frac1q}+n^{-s}\cos(nx) \\ 
\label{apid2}
&v_{\om,n}(x,0) = \om n^{-\frac1p}+n^{-s}\cos(nx).
\end{align}
Note that \eqref{cosest} implies the initial data $(u^{\omega,n}(x,0),v^{\omega,n}(x,0)) \in H^{s} \times H^{s}$ for all $s\geq 0$, since
\begin{align*}
&\|u_{\om,n}(x,0)\|_{H^s}=\|\om n^{-\frac1q}+n^{-s}\cos(nx)\|_{H^s}\lesssim n^{-\frac1q}+1 \\ 
&\|v_{\om,n}(x,0)\|_{H^{s}}=\|\om n^{-\frac1p}+n^{-s}\cos(nx)\|_{H^{s}}\lesssim n^{-\frac1p}+1.
\end{align*}
Hence, by Theorem 1.1 in \cite{zhou3}, there exists a $T>0$ such that, for $n>>1$, the Cauchy problem \eqref{nl} has a unique solution in 
$C\left([0,T);H^s \times H^{s}\right) \cap C^1\left([0,T);H^{s-1} \times H^{s-1}\right)$ with lifespan $T>0$ such that $(u^{\om,n},v^{\om,n})$ satisfies $\|(u,v)\|_s \leq 2\|(u_0,v_0)\|_s$ for $t \in [0,T]$.

To estimate the difference between the approximate and actual solutions, we note that $w=u^{\om,n}-u_{\om,n}$ and $y=v^{\om,n}-v_{\om,n}$ satisfies the following initial value problem

\begin{align}
\label{diff1}
\p_tw = &E-yf_1\p_xu_{\om,n}-(v^{\om,n})^p\p_xw\nonumber \\ 
&-aD^{-2}\left[yg_1\p_xv_{\om,n}u_{\om,n}+(v^{\om,n})^{p-1}(w\p_xv_{\om,n}+u^{\om,n}\p_xy)\right] \nonumber \\ 
&-(p-a)D^{-2}\left[yg_1\p_xv_{\om,n}\p_x^2u_{\om,n}+(v^{\om,n})^{p-1}(\p_xy\p_x^2u_{\om,n}+\p_xv^{\om,n}\p_x^2w)\right] \nonumber \\ 
&-p\p_xD^{-2}\left[yg_1\p_xv_{\om,n}\p_xu_{\om,n}+(v^{\om,n})^{p-1}(\p_xy\p_xu_{\om,n}+\p_xv^{\om,n}\p_xw)\right],
\end{align}

\begin{align}
\label{diff2}
\p_ty = &F-wf_2\p_xv_{\om,n}-(u^{\om,n})^q\p_xy\nonumber \\ 
&-bD^{-2}\left[wg_2\p_xu_{\om,n}v_{\om,n}+(u^{\om,n})^{q-1}(y\p_xu_{\om,n}+v^{\om,n}\p_xw)\right] \nonumber \\ 
&-(q-b)D^{-2}\left[wg_2\p_xu_{\om,n}\p_x^2v_{\om,n}+(u^{\om,n})^{q-1}(\p_xw\p_x^2v_{\om,n}+\p_xu^{\om,n}\p_x^2y)\right] \nonumber \\ 
&-q\p_xD^{-2}\left[wg_2\p_xu_{\om,n}\p_xv_{\om,n}+(u^{\om,n})^{q-1}(\p_xw\p_xv_{\om,n}+\p_xu^{\om,n}\p_xy)\right],
\end{align}

where

\begin{align*}
&f_1 = \sum_{k=0}^{p-1}(v^{\om,n})^{p-1-k}v_{\om,n}^k,  \\
&g_1 = \sum_{k=0}^{p-2}(v^{\om,n})^{p-2-k}v_{\om,n}^k, \\
&f_2 = \sum_{k=0}^{q-1}(u^{\om,n})^{q-1-k}u_{\om,n}^k, \\
&g_2 = \sum_{k=0}^{q-2}(u^{\om,n})^{q-2-k}u_{\om,n}^k.
\end{align*}

\begin{lemma}
\label{diffest1}
Let $s>5/2$.  The differences $w$ and $y$ satisfy
\begin{equation}
\label{diffest}
\|(w,y)\|_\s \lesssim n^{\beta_{s,\s}},
\end{equation}
where
\begin{equation}
\label{bexp}
\beta_{s,\s} = \begin{cases} r_{s,\s}, \ \ p<q \\ j_{s,\s} \ \ \  p>q \end{cases}
\end{equation}
and $5/2<\s+1<s$, $\s<2$.  In the case where $p=q$ we have $r_{s,\s}=j_{s,\s}$ and so we may take $\beta_{s,\s}$ to be either one.
\end{lemma}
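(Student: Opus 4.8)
The plan is to prove \eqref{diffest} by a single coupled energy estimate for the difference system \eqref{diff1}--\eqref{diff2} satisfied by $w=u^{\om,n}-u_{\om,n}$ and $y=v^{\om,n}-v_{\om,n}$, carried out at the $H^\s$ level with $\s$ in the stated window $3/2<\s<\min\{2,\,s-1\}$. The three restrictions on $\s$ are precisely what makes the estimate close. First, $\s>3/2$ gives the algebra property $\|fg\|_{H^\s}\lesssim\|f\|_{H^\s}\|g\|_{H^\s}$ and the embedding $H^\s\hookrightarrow W^{1,\infty}$. Second, $\s+1<s$ lets us bound the first derivatives $\p_xu_{\om,n}$, $\p_xv_{\om,n}$ of the actual solutions, and likewise of the approximate ones, in $H^\s$ by their $H^s$ norms, which by the size estimate of Theorem~1.1 in \cite{zhou3} and the computation preceding \eqref{apu} are all $\lesssim1$ uniformly in $n$ and in $t\in[0,T]$. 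Third, $\s<2$ keeps the negative-index products that appear below mild, since then $-1/2<\s-2<0$ and the crude estimate $\|fg\|_{H^{\s-2}}\lesssim\|f\|_{H^\s}\|g\|_{H^{\s-2}}$ is available.

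First I would apply $\lambda^\s$ to \eqref{diff1}, pair in $L^2$ with $\lambda^\s w$, do the same for \eqref{diff2} with $\lambda^\s y$, and add, so that (writing $\tfrac12\tfrac{d}{dt}\|(w,y)\|_\s^2$ for $\tfrac12\tfrac{d}{dt}\big(\|w\|_{H^\s}^2+\|y\|_{H^\s}^2\big)$, which is equivalent up to the usual constant)
\[
\tfrac12\tfrac{d}{dt}\|(w,y)\|_\s^2=\langle\lambda^\s E,\lambda^\s w\rangle+\langle\lambda^\s F,\lambda^\s y\rangle+(\text{transport terms})+(\text{nonlocal terms}).
\]
For the forcing, Lemma~\ref{erest} gives $\langle\lambda^\s E,\lambda^\s w\rangle\le\|E\|_{H^\s}\|w\|_{H^\s}\lesssim n^{r_{s,\s}}\|(w,y)\|_\s$, and similarly $\langle\lambda^\s F,\lambda^\s y\rangle\lesssim n^{j_{s,\s}}\|(w,y)\|_\s$. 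For the transport terms $-(v^{\om,n})^p\p_xw$ and $-(u^{\om,n})^q\p_xy$ I split off a commutator, $\lambda^\s\big((v^{\om,n})^p\p_xw\big)=(v^{\om,n})^p\lambda^\s\p_xw+[\lambda^\s,(v^{\om,n})^p]\p_xw$; integrating by parts in the first piece leaves $\p_x(v^{\om,n})^p$ paired against $(\lambda^\s w)^2$, while the Kato--Ponce commutator estimate controls the second, so the total is $\lesssim\big(\|\p_x(v^{\om,n})^p\|_{L^\infty}+\|(v^{\om,n})^p\|_{H^\s}\big)\|(w,y)\|_\s^2\lesssim\|(w,y)\|_\s^2$, using $\|\p_x(v^{\om,n})^p\|_{L^\infty}\lesssim n^{1/p-s}\lesssim1$ and $\|(v^{\om,n})^p\|_{H^\s}\lesssim1$. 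Every remaining term on the right of \eqref{diff1}--\eqref{diff2} is either a plain product of a single factor from $\{w,y,\p_xw,\p_xy\}$ with coefficients built out of $u^{\om,n},v^{\om,n},u_{\om,n},v_{\om,n}$ and their first derivatives — controlled by the algebra property and the uniform bounds above, e.g.\ $\|yf_1\p_xu_{\om,n}\|_{H^\s}\lesssim\|y\|_{H^\s}\|f_1\|_{H^\s}\|u_{\om,n}\|_{H^{\s+1}}\lesssim\|y\|_{H^\s}$ — or it is acted on by $D^{-2}$, respectively $\p_xD^{-2}$, which recovers two, respectively one, derivatives. Since inside each such operator the factor $w$ or $y$ carries at most two, respectively one, derivatives, these nonlocal terms cost no net derivatives: using $D^{-2}\colon H^{\s-2}\to H^\s$, $\p_xD^{-2}\colon H^{\s-1}\to H^\s$, the product estimates above, and $\|\p_x^2u_{\om,n}\|_{H^{\s-2}}\lesssim\|u_{\om,n}\|_{H^\s}\lesssim1$ (and similarly for $v_{\om,n}$), each nonlocal term is again $\lesssim\|(w,y)\|_\s^2$.

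Collecting the estimates and dividing by $\|(w,y)\|_\s$, $\tfrac{d}{dt}\|(w,y)\|_\s\lesssim\|(w,y)\|_\s+n^{r_{s,\s}}+n^{j_{s,\s}}$ with a constant independent of $n$ and of $t\in[0,T]$; since $w(0)=y(0)=0$, Gronwall's inequality gives $\|(w,y)(t)\|_\s\lesssim n^{r_{s,\s}}+n^{j_{s,\s}}\approx n^{\max\{r_{s,\s},\,j_{s,\s}\}}$ on $[0,T]$. It remains only to identify the exponent: a direct check over the two regimes in the definitions of $r_{s,\s}$ and $j_{s,\s}$, using $1/p>1/q$ when $p<q$, shows $r_{s,\s}\ge j_{s,\s}$ in every regime when $p<q$, the reverse inequality when $p>q$, and $r_{s,\s}=j_{s,\s}$ when $p=q$; hence $\max\{r_{s,\s},j_{s,\s}\}=\beta_{s,\s}$ as in \eqref{bexp}, which is \eqref{diffest}.

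The hard part will be the term-by-term bookkeeping for the nonlocal contributions in \eqref{diff1}--\eqref{diff2}: for each of the many products one must confirm that the derivatives falling on $w$ or $y$ are genuinely absorbed by $D^{-2}$ or $\p_xD^{-2}$, and that every coefficient factor — the sums $f_1,g_1,f_2,g_2$, the powers $(v^{\om,n})^{p-1}$, $(u^{\om,n})^{q-1}$, and the up-to-second derivatives of the actual solutions — is bounded in the relevant Sobolev space uniformly in $n$. This is exactly where $\s+1<s$ (so that $\p_xu_{\om,n},\p_xv_{\om,n}\in H^\s$ with norm $\lesssim1$) and $\s<2$ (so that a second derivative lands in $H^{\s-2}$ with $|\s-2|<1/2$, where the crude product estimate suffices) are used. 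A secondary point is that the Gronwall constant must be independent of $n$; this follows from the uniform bound $\|(u_{\om,n},v_{\om,n})(t)\|_s\le2\|(u^{\om,n},v^{\om,n})(0)\|_s\lesssim1$ together with the bounds on the approximate solutions recorded just before the lemma.
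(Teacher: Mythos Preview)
Your proposal is correct and follows essentially the same route as the paper: an $H^\s$ energy estimate on the difference system \eqref{diff1}--\eqref{diff2}, handling the transport terms by a commutator split with the Kato--Ponce estimate and integration by parts, and handling the nonlocal terms using $D^{-2}\colon H^{\s-2}\to H^\s$ together with the negative-index product estimate (the paper's Lemma~\ref{alg2}), then closing with Gronwall from zero initial data. The paper carries out the $w$ and $y$ estimates separately before adding, while you add from the outset, and you make explicit the identification $\max\{r_{s,\s},j_{s,\s}\}=\beta_{s,\s}$ that the paper leaves implicit; neither of these is a substantive difference.
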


\begin{proof}
We prove by the method of energy estimates and will only do so for the first component since the second is similar.  Throughout our proof, we repeatedly make use of the Cauchy-Schwarz inequality, the algebra property for Sobolev spaces and the following two lemmas.

\begin{lemma}
\label{kp}
(Kato-Ponce)  For $s>0$ we have that
\[
\|[D^s,f]g\|_{L^2} \lesssim \|D^s\|_{L^2}\|g\|_{L^\infty}+\|\p_xf\|_{L^\infty}\|D^{s-1}g\|_{L^2}.
\]
\end{lemma}

\begin{lemma}
\label{alg2}
Let $1/2<s<1$.  Then
\[
\|fg\|_{H^{s-1}} \lesssim \|f\|_{H^s}\|g\|_{H^{s-1}}.
\]
\end{lemma}

We first apply the operator $D^\s$ to the first component \eqref{diff1}, multiply by $D^\s w$ and integrate over the torus to obtain
\begin{equation}
\frac12\frac{d}{dt}\|w\|_{H^\s}^2 = \sum_{k=0}^9I_k,
\end{equation}
where each $I_k$ represents an integral to be estimated.  We will do so below.

\textbf{Estimate for $I_1$:}  Here we have that 
\[
I_1 = \int_\ci D^\s ED^\s wdx.
\]
Applying the Cauchy-Schwarz inequality, we find that 
\begin{equation}
\label{i1est}
I_1 \lesssim \|E\|_{H^\s}\|w\|_{H^\s} \lesssim n^{r_{s,\s}}\|w\|_{H^\s}.
\end{equation}

\textbf{Estimate for $I_2$:}  Here we have that
\[
I_2 = -\int_\ci D^\s(yf_1\p_xu_{\om,n})D^\s wdx.
\]
By Cauchy-Schwarz inequality and the algebra property, we have that
\begin{equation}
\label{i2est}
I_2 \lesssim \|y\|_{H^\s}\|f_1\|_{H^\s}\|u_{\om,n}\|_{H^{\s+1}}\|w\|_{H^\s} \lesssim \|y\|_{H^\s}\|w\|_{H^\s}.
\end{equation}

\textbf{Estimate for $I_3$:}  Here we have that
\begin{align*}
I_3 &= -\int_\ci D^\s((v^{\om,n})^p\p_xw)D^\s w dx \\
&= -\left[\int_\ci[D^\s,(v^{\om,n})^p]\p_xwD^\s wdx+\int_\ci (v^{\om,n})^p\p_xD^\s wD^\s w dx\right] \\ 
&=I_{3,1}+I_{3,2}.
\end{align*}
In the first integral, we utilize the Cauchy-Schwarz inequality and the Kato-Ponce estimate from Lemma \ref{kp} to find
\begin{align*}
I_{3,1} &\lesssim \|[D^\s,(v^{\om,n})^p]\p_xw\|_{L^2}\|w\|_{H^\s} \\ 
& \lesssim (\|(v^{\om,n})^p\|_{H^\s}\|\p_xw\|_{L^\infty}+\|\p_x(v^{\om,n})^p\|_{L^\infty}\|D^{\s-1}\p_xw\|_{L^2})\|w\|_{H^\s} \\ 
& \lesssim \|w\|_{H^\s}^2.
\end{align*}
In the second integral, we apply integration by parts to obtain
\[
I_{3,2} = \frac12\int_\ci\p_x(v^{\om,n})^p(D^\s w)^2dx \lesssim \|w\|_{H^\s}^2.
\]
Combining the above estimates on $I_{3,1}$ and $I_{3,2}$, we find that
\begin{equation}
\label{i3est}
I_3 \lesssim \|w\|_{H^\s}^2.
\end{equation}

\textbf{Estimate for $I_4$:}  Here we have that
\[
I_4 = -a\int_\ci D^\s D^{-2}(yg_1\p_xv_{\om,n}u_{\om,n})D^\s wdx.
\]
We apply the Cauchy-Schwarz inequality and Lemma \ref{alg2} to find that
\begin{align}
\label{i4est}
I_4 &\lesssim \|yg_1\p_xv_{\om,n}u_{\om,n}\|_{H^{\s-2}}\|w\|_{H^\s} \nonumber \\ 
&\lesssim \|yg_1u_{\om,n}\|_{H^{\s-1}}\|\p_xv_{\om,n}\|_{H^{\s-2}}\|w\|_{H^\s} \nonumber \\ 
& \lesssim \|y\|_{H^\s}\|w\|_{H^\s}.
\end{align}

\textbf{Estimate for $I_5$:}  Here we have that
\[
I_5 = -a\int_\ci D^\s D^{-2}[(v^{\om,n})^{p-1}(w\p_xv_{\om,n}+u^{\om,n}\p_xy)]D^\s wdx.
\]
Another application of the Cauchy-Schwarz inequality along with Lemma \ref{alg2}, we find that
\begin{equation}
\label{i5est}
I_5 \lesssim \|w\|_{H^\s}^2+\|w\|_{H^\s}\|y\|_{H^\s}.
\end{equation}

\textbf{Estimate for $I_6$:}  Here we have that
\[
I_6=-(p-a)\int_\ci D^\s D^{-2}(yg_1\p_xv_{\om,n}\p_x^2u_{\om,n})D^\s wdx.
\]
Another application of the Cauchy-Schwarz inequality and Lemma \ref{alg2} yields
\begin{equation}
\label{i6est}
I_6 \lesssim \|w\|_{H^\s}\|y\|_{H^\s}.
\end{equation}

\textbf{Estimate for $I_7$:}  Here we have that
\[
I_7 = -(p-a)\int_\ci D^\s D^{-2}[(v^{\om,n})^{p-1}(\p_xy\p_x^2u_{\om,n}+\p_xv^{\om,n}\p_x^2w)]D^\s wdx.
\]
Another application of the Cauchy-Schwarz inequality and Lemma \ref{alg2} yields
\begin{equation}
\label{i7est}
I_7 \lesssim \|w\|_{H^\s}^2+\|w\|_{H^\s}\|y\|_{H^\s}.
\end{equation}

\textbf{Estimate for $I_8$:}  Here we have that
\[
I_8 = -p\int_\ci D^\s\p_xD^{-2}(yg_1\p_xv_{\om,n}\p_xu_{\om,n})D^\s wdx.
\]
An application of the Cauchy-Schwarz inequality along with the algebra property for Sobolev spaces yields
\begin{equation}
\label{i8est}
I_8 \lesssim \|w\|_{H^\s}\|y\|_{H^\s}.
\end{equation}

\textbf{Estimate for $I_9$:}  Here we have that
\[
I_9 = -p\int_\ci D^\s\p_xD^{-2}[(v^{\om,n})^{p-1}(\p_xy\p_xu_{\om,n}+\p_xv^{\om,n}\p_xw)]D^\s wdx.
\]
An application of the Cauchy-Schwarz inequality along with the algebra property for Sobolev spaces yields
\begin{equation}
\label{i9est}
I_9 \lesssim \|w\|_{H^\s}^2+\|w\|_{H^\s}\|y\|_{H^\s}.
\end{equation}

Collecting estimates for $I_1$ through $I_9$ we find that
\begin{equation}
\label{wen1}
\frac12\frac{d}{dt}\|w\|_{H^\s}^2 \lesssim \|w\|_{H^\s}^2+\|y\|_{H^\s}\|w\|_{H^\s}+n^{r_{s,\s}}\|w\|_{H^\s}
\end{equation}
or equivalently
\begin{equation}
\label{wen2}
\frac{d}{dt}\|w\|_{H^\s} \lesssim \|w\|_{H^\s}+\|y\|_{H^\s}+n^{r_{s,\s}}.
\end{equation}
Applying the same methods to the second component, we find that
\begin{equation}
\label{yeqn1}
\frac{d}{dt}\|y\|_{H^\s} \lesssim \|w\|_{H^\s}+\|y\|_{H^\s}+n^{j_{s,\s}}.
\end{equation}

We now wish to solve \eqref{wen2} and \eqref{yeqn1} simultaneously.  Indeed, by adding the two, we find that
\begin{equation}
\frac{d}{dt}\|(w,y)\|_\s \lesssim \|(w,y)\|_\s +n^{r_{s,\s}}+n^{j_{s,\s}}.
\end{equation}
Upon solving this differential equation, we obtain the desired result.

\end{proof}

\textbf{Proof of Non-Uniform Dependence.}  Consider the sequences of (actual) unique solutions to the Cauchy problem \eqref{nl} given by $(u_{1,n}(x,t),v_{1,n}(x,t))$ and $(u_{-1,n}(x,t),v_{-1,n}(x,t))$ with initial data $$(u_{1,n}(x,0),v_{1,n}(x,0)) \  \text{and} \  (u_{-1,n}(x,0),v_{-1,n}(x,0))$$ respectively.  Applying Lemma \ref{diffest1} we have
\begin{equation*}
\|(u^{\om,n}(t)-u_{\om,n}(t),v^{\om,n}(t)-v_{\om,n}(t))\|_\s  \lesssim n^{\beta_{s,\s}}, \ \ 0\leq t \leq T.
\end{equation*}
Furthermore, from our solution size estimate, we have that 
\begin{equation*}
\|(u_{\om,n}(t),v_{\om,n}(t))\|_k \lesssim \|(u_{\om,n}(0),v_{\om,n}(0))\|_k \lesssim n^{-\frac1p}+n^{-\frac1q}+n^{k-s}
\end{equation*}
for $k>s$.

This implies that
\begin{align*}
&\|u_{\om,n}(t)\|_{H^k} \lesssim \|u_{\om,n}(0)\|_{H^k}+\|v_{\om,n}(0)\|_{H^{k}}, \\
&\|v_{\om,n}(t)\|_{H^{k}} \lesssim \|u_{\om,n}(0)\|_{H^k}+\|v_{\om,n}(0)\|_{H^{k}}.
\end{align*}
Thus, by utilizing the triangle inequality, we obtain
\begin{align*}
&\|u^{\om,n}(t)-u_{\om,n}(t)\|_{H^k} \lesssim n^{-\frac1p}+n^{-\frac1q}+n^{k-s} \lesssim n^{k-s}, \\ 
&\|v^{\om,n}(t)-v_{\om,n}(t)\|_{H^{k}} \lesssim n^{-\frac1p}+n^{-\frac1q}+n^{k-s} \lesssim n^{k-s}.
\end{align*}
Now we wish to use the following interpolation lemma.
\vspace{.1in}
\begin{lemma}
\label{hsinterp}
Let $f \in H^s$ and $s_1<s<s_2$.  Then
\begin{equation*}
\|f\|_{H^s} \leq \|f\|_{H^{s_1}}^{\frac{s_2-s}{s_2-s_1}}\|f\|_{H^{s_2}}^{\frac{s-s_1}{s_2-s_1}}.
\end{equation*}
\end{lemma}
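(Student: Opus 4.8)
The plan is to prove this by a direct H\"older estimate on the Fourier side; it is the usual logarithmic convexity of the $H^s$ norm scale. Set
\[
\theta = \frac{s_2-s}{s_2-s_1}\in(0,1), \qquad 1-\theta = \frac{s-s_1}{s_2-s_1},
\]
so that $s = \theta s_1 + (1-\theta)s_2$. Since the paper works on $\ci$, I would use Parseval's identity in the form $\|g\|_{H^r}^2 = \sum_{\xi\in\zz}(1+\xi^2)^r|\wh g(\xi)|^2$ for every $r\in\rr$ (on $\rr$ the sum is replaced by an integral and the argument is unchanged).

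First I would split both the weight and the Fourier coefficient along the convex combination defining $s$:
\[
(1+\xi^2)^s|\wh f(\xi)|^2 = \big[(1+\xi^2)^{s_1}|\wh f(\xi)|^2\big]^{\theta}\,\big[(1+\xi^2)^{s_2}|\wh f(\xi)|^2\big]^{1-\theta}.
\]
Summing over $\xi\in\zz$ and applying H\"older's inequality with the conjugate exponents $1/\theta$ and $1/(1-\theta)$ then gives
\[
\|f\|_{H^s}^2 \;\le\; \Big(\sum_{\xi\in\zz}(1+\xi^2)^{s_1}|\wh f(\xi)|^2\Big)^{\!\theta}\Big(\sum_{\xi\in\zz}(1+\xi^2)^{s_2}|\wh f(\xi)|^2\Big)^{\!1-\theta} = \|f\|_{H^{s_1}}^{2\theta}\,\|f\|_{H^{s_2}}^{2(1-\theta)}.
\]
Taking square roots and inserting the values of $\theta$ and $1-\theta$ yields exactly the asserted bound.

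The argument has no genuine obstacle; the only things to verify are that $1/\theta$ and $1/(1-\theta)$ are H\"older-conjugate (immediate from $\theta+(1-\theta)=1$) and that the two factors on the right are finite. If $f\notin H^{s_2}$ the right side is $+\infty$ and the inequality is trivial, so one may assume $f\in H^{s_2}$; then $s_1<s<s_2$ together with $1+\xi^2\ge 1$ gives $(1+\xi^2)^{s_1},(1+\xi^2)^{s}\le(1+\xi^2)^{s_2}$, so automatically $f\in H^{s_1}\cap H^{s}$ and all three norms are finite. This is the interpolation inequality that is then combined with the bounds $\|u^{\om,n}(t)-u_{\om,n}(t)\|_{H^\s}\lesssim n^{\beta_{s,\s}}$ and $\|u^{\om,n}(t)-u_{\om,n}(t)\|_{H^k}\lesssim n^{k-s}$ to control the difference of approximate and actual solutions in $H^s$.
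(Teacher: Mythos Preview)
Your proof is correct; this is precisely the standard H\"older-on-the-Fourier-side argument for logarithmic convexity of the $H^s$ scale. The paper itself does not prove Lemma~\ref{hsinterp} at all---it is stated without proof as a well-known tool and then applied immediately to interpolate $\|w\|_{H^s}$ and $\|y\|_{H^{s}}$ between the $H^\s$ and $H^k$ bounds---so your argument in fact supplies what the paper omits.
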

Let $w(t)=u^{\om,n}(t)-u_{\om,n}(t)$ and $y(t)=v^{\om,n}(t)-v_{\om,n}(t)$.  Interpolating between the $H^\s$ norm and the $H^k$ norm to get an $H^s$ estimate for $w(t)$ and $y(t)$ where $k=[s]+2$
yields the following estimate:
\begin{align*}
&\|w\|_{H^s} \leq \|w\|_{H^\s}^{\frac{k-s}{k-\s}}\|w\|_{H^k}^{\frac{s-\s}{k-\s}} \lesssim \left(n^{\beta_{s,\s}}\right)^\frac{k-s}{k-\s}\left(n^{k-s}\right)^\frac{s-\s}{k-\s} \simeq n^{\alpha_{s,\s}} \\ 
&\|y\|_{H^{s-1}} \leq \|y\|_{H^{\s}}^\frac{k-s}{k-\s}\|y\|_{H^{k}}^\frac{s-\s}{k-\s} \lesssim \left(n^{\beta_{s,\s}}\right)^\frac{k-s}{k-\s}\left(n^{k-s}\right)^\frac{s-\s}{k-\s} \simeq n^{\alpha_{s,\s}}.
\end{align*}
where
\[
\alpha_{s,\s} = \left(\frac{k-s}{k-\s}\right)(\beta_{s,\s}+s-\s)<0.
\]
At time $t=0$, we have
\begin{align*}
&\|u_{1,n}(0)-u_{-1,n}(0)\|_{H^s}=\|2n^{-\frac1q}\|_{H^s} \simeq n^{-\frac1q} \rightarrow 0 \ \ \text{as} \ \ n \rightarrow \infty \\ 
&\|v_{1,n}(0)-v_{-1,n}(0)\|_{H^{s}}=\|2n^{-\frac1p}\|_{H^{s}} \simeq n^{-\frac1p} \rightarrow 0 \ \ \text{as} \ \ n \rightarrow \infty.
\end{align*}
However, at time $t>0$ we have
\begin{align}
\label{udiff1}
\|u_{1,n}(t)-u_{-1,n}(t)\|_{H^s} &\geq \|u^{1,n}(t)-u^{-1,n}(t)\|_{H^s}-\|u^{1,n}(t)-u_{1,n}(t)\|_{H^s} \nonumber \\ &-\|u^{-1,n}(t)-u_{-1,n}(t)\|_{H^s}.
\end{align}
and
\begin{align}
\label{vdiff1}
\|v_{1,n}(t)-v_{-1,n}(t)\|_{H^{s}} &\geq \|v^{1,n}(t)-v^{-1,n}(t)\|_{H^{s}}-\|v^{1,n}(t)-v_{1,n}(t)\|_{H^{s}} \nonumber \\ &-\|v^{-1,n}(t)-v_{-1,n}(t)\|_{H^{s}}.
\end{align}
Adding \eqref{udiff1} and \eqref{vdiff1} we find that
\begin{align}
\|(u_{1,n}(t)-u_{-1,n}(t),v_{1,n}(t)-\rho_{-1,n}(t))\|_s &\geq \|u^{1,n}(t)-u^{-1,n}(t)\|_{H^s}-cn^{\alpha_{s,\s}} \nonumber \\ 
&+\|v^{1,n}(t)-v^{-1,n}(t)\|_{H^{s}}-an^{\alpha_{s,\s}},
\end{align}
where $a$ and $c$ are constants.
Taking the limit infimum of both sides gives us
\begin{align*}
\lim_{n \rightarrow \infty} \inf &\left(\|(u_{1,n}(t)-u_{-1,n}(t),v_{1,n}(t)-v_{-1,n}(t))\|_s \right) \nonumber \\ 
&\geq \lim_{n \rightarrow \infty} \inf \left(\|(u^{1,n}(t)-u^{-1,n}(t),v^{1,n}(t)-v^{-1,n}(t))\|_s \right).
\end{align*}
Hence, to finish the argument for Theorem \ref{nonunif}, we need only find a lower bound for the difference of known approximate solutions:
\begin{align*}
&u^{1,n}(t)-u^{-1,n}(t)=2n^{-\frac1q}+n^{-s}[\cos(nx-t)-\cos(nx-(-1)^pt)], \\
&v^{1,n}(t)-v^{-1,n}(t)=2n^{-\frac1p}+n^{-s}[\cos(nx-t)-\cos(nx-(-1)^qt)].
\end{align*}

Using the identity $\cos \alpha - \cos \beta=-2\sin\frac{\alpha+\beta}{2}\sin\frac{\alpha-\beta}{2}$ yields
\begin{align*}
&u^{1,n}(t)-u^{-1,n}(t)=2n^{-\frac1q}-2n^{-s}\sin\left(\frac{2nx-t(1+(-1)^p)}{2}\right)\sin\left(\frac{(-1)^p-1}{2}t\right), \\
&v^{1,n}(t)-v^{-1,n}(t)=2n^{-\frac1p}-2n^{-s}\sin\left(\frac{2nx-t(1+(-1)^q)}{2}\right)\sin\left(\frac{(-1)^q-1}{2}t\right).
\end{align*}
Therefore,
\begin{equation*}
\|(u^{1,n}(t)-u^{-1,n}(t),v^{1,n}(t)-v^{-1,n}(t))\|_s \gtrsim |\sin t|-n^{-\frac1p}-n^{-\frac1q},
\end{equation*}
when $p,q$ are both odd or they are mixed even/odd.  In the case when they are both even, choose $\om=1,0$ and replace $|\sin t|$ with $|\sin(t/2)|$.

Taking the limit infimum of both sides yields 
\begin{equation*}
\lim_{n \rightarrow \infty} \inf \left(\|(u^{1,n}(t)-u^{-1,n}(t),v^{1,n}(t)-v^{-1,n}(t))\|_s \right) \gtrsim |\sin(t)|,
\end{equation*}
which concludes our proof of Theorem \ref{nonunif}.    $\Box$

\end{document}